\documentclass[letterpaper,10pt]{article}
\usepackage[textwidth=11.5cm, textheight=22cm, top=2.0cm, centering]{geometry}
\usepackage{setspace}
\usepackage[T1]{fontenc}
\usepackage{lmodern}
\usepackage[english]{babel}
\usepackage{microtype} 
\usepackage[scaled=0.95]{inconsolata} 
\makeatletter
\renewcommand{\texttt}[1]{{\ttfamily #1}}

\renewcommand{\mathtt}[1]{\text{\texttt{#1}}} 
\makeatother
\usepackage{csquotes}
\usepackage{etoolbox}
\usepackage{bbding}
\usepackage{changepage}
\usepackage{booktabs}
\usepackage{graphicx}
\usepackage[x11names, dvipsnames]{xcolor}
\definecolor{Linkz}{RGB}{30, 110, 170}
\definecolor{Darkenta}{RGB}{185, 35, 90}
\definecolor{Magentz}{RGB}{255, 35, 170}
\definecolor{Lightenta}{RGB}{254, 232, 255}
\definecolor{Reference}{RGB}{35, 180, 90}
\definecolor{Periwinkle}{RGB}{102, 51, 255}
\definecolor{yello}{RGB}{255, 245, 230}
\definecolor{Greeno}{RGB}{0, 140, 100}
\definecolor{Leeno}{RGB}{239, 255, 232}
\definecolor{Nicegreen}{RGB}{100, 200, 130}
\usepackage{amsmath, amsthm, amscd, amssymb, stmaryrd}
\usepackage{stackengine}
\usepackage{bussproofs}
\usepackage[square]{natbib}

\usepackage[
colorlinks=true,
linkcolor=Darkenta,
citecolor=Greeno,
urlcolor=Darkenta,
]{hyperref}
\usepackage{titlesec} 
\usepackage{abstract} 
\usepackage{enumitem} 
\usepackage{ccicons} 
\usepackage{float}
\usepackage{placeins} 
\usepackage{tikz} 
\usepackage{tikz-cd}
\usetikzlibrary{positioning,fit,patterns}
\usetikzlibrary{arrows.meta} 
\usepackage{adjustbox} 
\usepackage{pgfmath} 
\usepackage{ifthen} 

\newtheoremstyle{upright}
{6pt plus 2pt minus 2pt} 
{6pt plus 2pt minus 2pt} 
{\normalfont} 
{} 
{\bfseries} 
{.} 
{.5em} 
{} 
\theoremstyle{upright}
\theoremstyle{upright}
\newtheorem{theorem}{Theorem}[subsection]
\newtheorem{remark}[theorem]{Remark}

\newtheorem{definition}[theorem]{Definition}

\newtheorem{proposition}[theorem]{Proposition}
\newtheorem{lemma}[theorem]{Lemma}

\newtheorem{corollary}[theorem]{Corollary}

\newtheorem{mechanization}[theorem]{Mechanization}

\makeatletter
\renewenvironment{proof}[1][Proof]{%
	\par\pushQED{\qed}%
	\normalfont
	\topsep6\p@\@plus6\p@\relax
	\trivlist
	\item[\hskip\labelsep\slshape #1\@addpunct{.}]%
}{%
	\popQED\endtrivlist\@endpefalse
}

\makeatother

\usepackage{algpseudocode}
\usepackage[most]{tcolorbox}
\usepackage{listings}
\newtcolorbox{breakbox}[2][]{%
	breakable,
	={#2},
	fonttitle=\bfseries,
	colback=white,
	colframe=black!20,
	coltitle=black,
	colbacktitle=white,
	boxrule=0.5pt,
	arc=0pt,
	boxsep=7pt,
	left=3pt,
	right=2pt,
	top=2pt,
	bottom=4pt,
	fontupper=\small\sffamily, 
	#1
}

\AtBeginEnvironment{quotation}{\sffamily}
\renewenvironment{quotation}
{\small\vspace{0.5em}\begin{adjustwidth}{4em}{4em}%

		\setlength{\parindent}{0pt}%
		\setlength{\parskip}{\medskipamount}%
	}
	{\end{adjustwidth}\vspace{1em}}
\newcommand{\customsectionstyle}[2]{%
	\titleformat{\section}[block]
	{\normalfont\fontsize{#1}{1.2\dimexpr#1\relax}\selectfont\centering}
	{\thesection}{1em}%
	{%
		\ifthenelse{\equal{#2}{true}}{\MakeUppercase}{\relax}%
	}%
}
\newcommand{\customsectionspacing}[3]{%
	\titlespacing*{\section}{#1}{#2}{#3}%
}
\newcommand{\customsubsectionstyle}[2]{%
	\titleformat{\subsection}[block]
	{\normalfont\fontsize{#1}{1.2\dimexpr#1\relax}\selectfont\centering}
	{\thesubsection}{1em}%
	{%
		\ifthenelse{\equal{#2}{true}}{\MakeUppercase}{\relax}%
	}%
}
\newcommand{\customsubsectionspacing}[3]{%
	\titlespacing*{\subsection}{#1}{#2}{#3}%
}
\customsectionstyle{14pt}{true}
\customsectionspacing{0pt}{3.5ex plus 1ex minus 0.2ex}{2.5ex}
\customsubsectionstyle{11pt}{true}
\customsubsectionspacing{0pt}{4ex plus 1ex minus 0.2ex}{2ex}
\usepackage{caption}

\makeatletter
\newcommand{\shorttitle}[1]{\def\@shorttitle{#1}}
\newcommand{\email}[1]{\def\@email{#1}}
\newcommand{\metadata}[1]{\def\@metadata{#1}}
\renewcommand{\maketitle}{%
	\begin{center}
		\vfill
		{\fontsize{18pt}{19pt}\selectfont \@title \par}
		\vspace{1em}
		{\normalsize \@author \par}
		\vspace{0.1em}
		{\normalsize \@date \par}
	\end{center}
}
\makeatother

\begin{document}
\newcommand{\T}{\operatorname{T}}

\newcommand{\Formula}{\ensuremath{\mathsf{Form}}}
\newcommand{\Bott}{\ensuremath{\bot}}
\newcommand{\Imp}{\ensuremath{\to}}
\newcommand{\Neg}[1]{\ensuremath{\lnot #1}}
\newcommand{\Eqv}[3][C]{\ensuremath{#2 \simeq_{#1} #3}}
\newcommand{\LEM}[1][C]{\ensuremath{\mathsf{LEM}(#1)}}
\newcommand{\LocalLEM}[2][C]{\ensuremath{\mathsf{LocalLEM}_{#1}(#2)}}
\newcommand{\MP}[1][C]{\ensuremath{\mathsf{MP}(#1)}}
\newcommand{\Det}[3][C]{\ensuremath{\mathsf{Det}_{#1}(#2,#3)}}
\newcommand{\Cons}[1][C]{\ensuremath{\mathsf{Cons}(#1)}}
\newcommand{\SClass}[1][C]{\ensuremath{\mathsf{Signed}(#1)}}
\newcommand{\MemDec}[1][C]{\ensuremath{\mathsf{MemDec}(#1)}}
\newcommand{\RefComp}[1][C]{\ensuremath{\mathsf{RefComp}(#1)}}
\newcommand{\RefInd}[1][C]{\ensuremath{\mathsf{RefInd}(#1)}}
\newcommand{\NegFP}[2][C]{\ensuremath{\mathsf{NegFP}_{#1}(#2)}}
\newcommand{\evapp}[2]{\ensuremath{\mathsf{eval}(#1,#2)}}
\newcommand{\Defeq}{\ensuremath{\mathrel{\triangleq}}}


\title{\uppercase{Remarks on\\ Primitive Regulation}}

\author{Milan Rosko}
\date{July 2026}

\maketitle

\begin{center}\footnotesize{
		ORCID: \href{https://orcid.org/0009-0003-1363-7158}{\footnotesize\textsf{0009-0003-1363-7158}}\\
}
\end{center}

\begin{abstract} \vspace{-1ex}\footnotesize{
	We prove, and mechanize in \textsc{Rocq}, an obstruction to closure-level \textsc{Excluded Middle} for primitive regulators $C : \mathsf{Form} \to \mathsf{Prop}$ over the closed implication--falsity fragment $A,B ::= \bot \mid A \to B$. Write $\mathsf{LEM}(C)$ for the  that $C(A)\lor C(\lnot A)$ hold for every formula $A$. If $C$ is closed under \textsc{Modus Ponens}, is consistent, and admits a formula $B$ satisfying $B\simeq_C\lnot B$, where $A \simeq_C B$ abbreviates $C(A \to B) \land C(B \to A)$, then $\mathsf{LEM}(C)$ is impossible. In fact, consistency excludes both $C(B)$ and $C(\lnot B)$, so the global conclusion uses only the instance of closure-level \textsc{Excluded Middle} at $B$.
	}
\end{abstract}
\section{Introduction}
\label{sec:introduction}

\subsection{Intuition}
\label{sec:introduction:intuition}

Consider how this question already \emph{pressures} the form of answer it solicits:
\begin{quotation}
	Every formula is true or false. Is that claim itself true or false?
\end{quotation}
A \emph{yes} would endorse unrestricted closure-level \textsc{Excluded Middle}, while a \emph{no} would subject that totality principle to the same true-or-false demand.

\subsection{Background}
\label{sec:introduction:background}

Impossibility recurs across several formal settings. \textsc{Russell's Antinomy} \citep{russell08,russell27} exposes unrestricted comprehension; \textsc{Incompleteness} \citep{godel31,rosser36} turns arithmetized representation of syntax back upon derivability; \textsc{Undefinability} \citep{tarski1933} turns \textsc{Diagonalization} against a proposed truth predicate; computability-theoretic limitations \citep{church36,turing37,kleene52,rice53} combine effective indexing with self-application; and \textsc{Löb's Theorem} \citep{loeb55} controls self-referential provability through derivability conditions. Their objects and conclusions differ, but each motivates the study of fixed-points and total classification.

The present paper isolates this \emph{pressure} at the level of primitive regulation. Its question is whether a formula-indexed acceptance predicate can simultaneously admit a closure-equivalence negation fixed-point, support detachment, remain consistent, and satisfy closure-level \textsc{Excluded Middle}.

Prior to the formal definitions, let $C$ denote a primitive acceptance predicate
\begin{equation}
	\mathrm{Regulator}\;C,
\end{equation}
on the closed implication--falsity fragment. We use \emph{regulator} in a sparse sense, as a \textsc{Black Box} \citep{ashby56}: a constrained interface that admits or rejects proposed responses. In the checked realization developed below, finite proof scripts serve as instructions, formulas as proposed outputs, and a Boolean checker determines acceptance. The derived release gate returns an accepted output or nothing; any listener that interprets a released formula as behavior remains external to the formal substrate.

Thus $C(A)$ means that the formula $A$ is accepted. The following demands can be made. (i) A formula $B$ satisfies $B\simeq_C\Neg B$. (ii) $C$ supports \emph{detachment}: if $C(A \Imp D)$ and $C(A)$, then $C(D)$. (iii) $C$ is \emph{consistent}: it does not accept $\Bott$. (iv) $C$ satisfies closure-level \textsc{Excluded Middle}: for every formula $A$, either $C(A)$ or $C(\Neg A)$. Assume now a formula $B$ satisfying
\begin{equation}
	B \simeq_C \Neg B,
\end{equation}
so that $C$ accepts both $B\Imp\Neg B$ and $\Neg B\Imp B$. Closure-level \textsc{Excluded Middle} then supplies one of the two acceptance facts
\begin{equation}
	C(B) \quad\text{or}\quad C(\Neg B).
\end{equation}
Whichever branch holds, detachment along the accepted fixed-point implications produces both $C(B)$ and $C(\Neg B)$, and hence $C(\Bott)$. Consistency turns this collapse into a contradiction.

The global principle is therefore obstructed at a single formula. The result is stronger than the bare failure of a universal schema: once the fixed-point is present, one instance of closure-level \textsc{Excluded Middle} already carries the entire contradiction.

\subsection{Contributions}
\label{sec:introduction:contributions}

In the companion \textsc{Rocq} development, \texttt{M001} and \texttt{L001} are module identifiers. \texttt{M001} supplies the executable syntax, proof-checking substrate, and generic accepted-output gate. \texttt{L001} supplies the abstract closure-obstruction layer over the primitive syntax.

We make four contributions. (i) We derive the failure of closure-level \textsc{Excluded Middle} for every consistent, modus-ponens-closed primitive regulator admitting a closure-equivalence negation fixed-point, and show that the proof uses only the instance of closure-level \textsc{Excluded Middle} at that point, the fixed-point direction aligned with the selected branch, and the corresponding detachment instances. (ii) We show that signed classification and refutation completeness are obstructed at the fixed-point, while ordinary membership decision alone is not implicated and the weaker notion of a refutation-sound Boolean indicator is trivially inhabited. (iii) We prove the four grouped hypotheses irredundant by explicit elementary acceptance predicates. (iv) We distinguish the abstract boundary from the stronger \textsc{K}/\textsc{S} checked-derivability consequence supplied by \texttt{M001}, and verify the proof-critical contracts in \textsc{Rocq}.

\subsection{Roadmap}
\label{sec:introduction:roadmap}

The exposition follows this division. Section~\ref{sec:m001} develops the \texttt{M001} substrate by fixing the closed $\Bott/\Imp$ syntax, the closure vocabulary, and the checked-derivability instance. Section~\ref{sec:l001} develops the \texttt{L001} obstruction layer by proving the local collapse, deriving the obstruction to closure-level \textsc{Excluded Middle}, separating the stronger checked consequence, repackaging the fixed-point through a supplied goal frame, developing the classification consequences, and proving irredundancy of the four grouped hypotheses. Section~\ref{sec:mech} records the \texttt{M001} substrate and the six certified \texttt{L001} contracts. Section~\ref{sec:discussion} explains why the global abstract obstruction is already determined by one local branch.

\section{Substrate}
\label{sec:m001}

\subsection{Object syntax}
\label{sec:m001:syntax}

\begin{definition}[Syntax]
\label{def:syntax}
	Working in a constructive metatheory \citep{troelstra88}, such as the \textsc{Calculus of Constructions} \citep{coquand88}, we take as our object language a minimal propositional language containing only falsity and implication.
	\begin{equation}
		\label{eq:syntax}
		A,B \;::=\; \Bott \quad\mid\quad A \Imp B, \quad \Neg A \;\Defeq\; A \Imp \Bott.
	\end{equation}
	We write $\Formula$ for the type of formulas; formulas are finite trees over $\{\Bott, \Imp\}$. There are no atoms, propositional variables, quotation constructors, object-level substitution operations, or semantic truth predicates in this language.
\end{definition}

\begin{lemma}[No formula is its own negation]
\label{lem:no-self-neg}
	For every formula,
	\begin{equation}
	\forall A : \Formula, \; A \neq (A \Imp \Bott).
	\end{equation}
\end{lemma}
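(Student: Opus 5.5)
The plan is to argue by structural induction on $A : \Formula$, or equivalently by a size measure. The claim is the standard fact that a finite tree cannot be a proper subtree of itself: if $A = (A \Imp \Bott)$, then $A$ occurs as an immediate subterm of itself.

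The cleanest concrete route is a size argument. Define $\mathsf{size} : \Formula \to \mathbb{N}$ by $\mathsf{size}(\Bott) = 1$ and $\mathsf{size}(A \Imp B) = \mathsf{size}(A) + \mathsf{size}(B) + 1$. Then suppose $A = (A \Imp \Bott)$ and apply $\mathsf{size}$ to both sides. This gives $\mathsf{size}(A) = \mathsf{size}(A) + 2$, which is impossible in $\mathbb{N}$. Only injectivity of a function on equal arguments (congruence) and elementary arithmetic are used, so the argument is constructive.

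Alternatively, one can proceed by direct induction on $A$, generalizing appropriately:
\begin{itemize}
\item In the case $A = \Bott$, the hypothesis $\Bott = (\Bott \Imp \Bott)$ contradicts disjointness of the constructors $\Bott$ and $\Imp$. Constructively, this is established by a discriminating function into $\mathsf{Prop}$ sending $\Bott$ to $\mathsf{True}$ and any implication to $\mathsf{False}$.
\item In the case $A = (A_1 \Imp A_2)$, the hypothesis $(A_1 \Imp A_2) = ((A_1 \Imp A_2) \Imp \Bott)$ yields, by injectivity of $\Imp$, that $A_1 = (A_1 \Imp A_2)$.
\end{itemize}

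This second case is the only real obstacle. The induction hypothesis as stated, $A_1 \neq (A_1 \Imp \Bott)$, does not directly apply, because the right-hand side now has $A_2$ rather than $\Bott$. So the invariant must be strengthened: first prove $\forall A\, B,\ A \neq (A \Imp B)$ by induction on $A$. The $\Bott$ case again uses constructor disjointness. In the implication case, injectivity gives $A_1 = (A_1 \Imp A_2)$, contradicting the induction hypothesis for $A_1$ instantiated at $B := A_2$. The lemma then follows by instantiating $B := \Bott$.

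Because the size argument sidesteps this generalization entirely, I would present it as the main proof and mention the strengthened induction as the structural alternative. In \textsc{Rocq}, the strengthened induction is essentially what tactics like \texttt{induction} followed by \texttt{injection} and \texttt{congruence} carry out.
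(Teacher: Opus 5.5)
Your main size argument is correct and is essentially the paper's proof: the paper also counts constructors, $\lvert A\Imp\Bott\rvert = 1+\lvert A\rvert+\lvert\Bott\rvert > \lvert A\rvert$, and concludes that $A$ and $A\Imp\Bott$ cannot be equal. One small wording slip: applying $\mathsf{size}$ to both sides of an equation uses congruence, not injectivity. Your strengthened-induction alternative is also correct.
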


\begin{proof}
	Let $\lvert A\rvert$ be the number of constructors in the finite formula tree $A$. Then
	\begin{equation}
		\lvert A\Imp\Bott\rvert
		= 1+\lvert A\rvert+\lvert\Bott\rvert
		> \lvert A\rvert,
	\end{equation}
	so $A$ and $A\Imp\Bott$ cannot be equal.
\end{proof}

\begin{remark}
	Lemma~\ref{lem:no-self-neg} forces every fixed-point claim below to be stated via $\simeq_C$, never as a syntactic identity of formulas.
\end{remark}

\subsection{Closure predicates}
\label{sec:m001:closure}

\begin{definition}[Primitive regulator with closure predicate]
\label{def:primitive-regulator}
	A \emph{primitive regulator} on the formula language is a predicate
	\begin{equation}
		C \;:\; \Formula \to \mathsf{Prop}.
		\label{eq:primitive-regulator}
	\end{equation}
	We write $C(A)$, read “$C$ accepts $A$.” A primitive regulator is a proposition-valued acceptance predicate, also called a \emph{closure predicate} when emphasizing its closure properties. All closure rules used below are explicit hypotheses, and every result is parametric in $C$.
\end{definition}

\begin{remark}
	Throughout, omitted parameters in $\LEM{}$, $\MP{}$, $\Det{A}{D}$, $\Cons{}$, $\LocalLEM{B}$, $\SClass{}$, $\MemDec{}$, $\RefComp{}$, $\RefInd{}$, and $\NegFP{B}$ refer to the ambient closure predicate $C$.
\end{remark}

\begin{definition}[Closure equivalence]
\label{def:equiv}
	For formulas $A$ and $B$, closure equivalence means mutual acceptance of the corresponding implications. We define
\begin{equation}
	\label{eq:closure-equiv}
	\Eqv{A}{B} \;\Defeq\; C(A \Imp B) \;\land\; C(B \Imp A).
\end{equation}
	Thus $A$ and $B$ are equivalent relative to $C$ when $C$ accepts both directions between them.
\end{definition}

\begin{remark}
	$\simeq_C$ records exactly the two accepted implications between $A$ and $B$. The name does not endow a bare $C$ with reflexivity, transitivity, congruence, or substitution. By Lemma~\ref{lem:no-self-neg}, $A = \Neg A$ never holds, whereas $\Eqv{A}{\Neg A}$ may, depending on $C$.
\end{remark}

\begin{definition}[Closure-equivalence negation fixed-point]
\label{def:negfp}
	A formula $B : \Formula$ is a \emph{closure-equivalence negation fixed-point} for $C$ when
	\begin{equation}
	\label{eq:negfp-rel}
		\NegFP{B} \;\Defeq\; \Eqv{B}{\Neg B}.
	\end{equation}
	Equivalently,
	\begin{equation}
		\NegFP{B} \;\Defeq\; C(B\Imp\Neg B)\land C(\Neg B\Imp B).
	\end{equation}
	This is the fixed-point hypothesis used by the obstruction theorem.
\end{definition}

\begin{definition}[\textsc{Modus Ponens} and consistency]
\label{def:mp-cons-lem}
	The structural properties of $C$ are
\begin{equation}
	\begin{array}{lll}
		\MP&\Defeq& \forall A,B : \Formula,\; \bigl(C(A \Imp B) \land C(A)\bigr) \to C(B),\\[1ex]
		\Cons &\Defeq& C(\Bott) \to \bot.
	\end{array}
\end{equation}
\end{definition}

\begin{definition}[Local detachment]
\label{def:local-detachment}
	For formulas $A,D$, the corresponding detachment instance is
	\begin{equation}
		\Det{A}{D}
		\;\Defeq\;
		C(A\Imp D)\to C(A)\to C(D).
	\end{equation}
	Thus \MP{} entails \Det{A}{D} for every $A,D$, while a local argument may name only the instances it uses.
\end{definition}

\begin{definition}[Closure-level \textsc{Excluded Middle}]
\label{def:closure-lem}
	Closure-level \textsc{Excluded Middle} for $C$ is the totality principle
	\begin{equation}
		\label{eq:closure-lem}
		\LEM{} \;\Defeq\; \forall A : \Formula,\; C(A)\lor C(\Neg A).
	\end{equation}
	It requires the regulator to accept a positive or negative side for every object formula. The principle is a property of $C$; the obstruction to it below is proved in the constructive metatheory.
\end{definition}

\begin{definition}[Local closure-level \textsc{Excluded Middle}]
\label{def:local-lem}
	For a fixed formula $B$, local closure-level \textsc{Excluded Middle} at $B$ is the single instance
	\begin{equation}
		\LocalLEM{B} \;\Defeq\; C(B)\lor C(\Neg B).
	\end{equation}
	Clearly, \LEM{} entails \LocalLEM{B} for every $B$. The local obstruction below assumes only this one instance.
\end{definition}

\begin{definition}[Signed classification, membership decision, and refutation-sound indicators]
\label{def:classification}
	A \emph{signed classification} for $C$ is a Boolean classifier whose two verdicts carry internal closure certificates:
	\begin{equation}
		\label{eq:signed-classification}
		\begin{aligned}
			\SClass \;\Defeq\;{}& \exists s : \Formula \to \{\mathsf{tt},\mathsf{ff}\},\; \forall A : \Formula,\\
			&\bigl(s(A)=\mathsf{tt}\to C(A)\bigr)
			\land \bigl(s(A)=\mathsf{ff}\to C(\Neg A)\bigr).
		\end{aligned}
	\end{equation}
	An \emph{ordinary membership decision} instead decides the proposition $C(A)$:
	\begin{equation}
		\label{eq:membership-decision}
		\begin{aligned}
			\MemDec \;\Defeq\;{}& \exists m : \Formula \to \{\mathsf{tt},\mathsf{ff}\},\; \forall A : \Formula,\\
			&m(A)=\mathsf{tt}\;\longleftrightarrow\;C(A).
		\end{aligned}
	\end{equation}
	The additional bridge from non-membership to an accepted object-language negation is
	\begin{equation}
		\label{eq:refutation-completeness}
		\RefComp \;\Defeq\; \forall A : \Formula,\; \neg C(A) \to C(\Neg A).
	\end{equation}
	A \emph{refutation-sound Boolean indicator} certifies an accepted object-language negation whenever it returns true, but has no availability, completeness, or nontriviality requirement:
	\begin{equation}
		\begin{aligned}
			\RefInd \;\Defeq\;{}& \exists r : \Formula \to \{\mathsf{tt},\mathsf{ff}\},\\
			& \forall A : \Formula,\; r(A) = \mathsf{tt} \to C(\Neg A).
		\end{aligned}
	\end{equation}
\end{definition}

\begin{remark}
	Signed classification and membership decision are intentionally distinct. A false signed verdict already certifies $C(\Neg A)$; a false membership verdict supplies only $\neg C(A)$. The latter reaches an accepted negation only through \RefComp{}. A refutation-sound indicator is weaker still: it need never return true. Since no model of computation is fixed, none of these abstract conditions alone states recursion-theoretic undecidability.
\end{remark}

\begin{definition}[\textsc{Regulator Theory}]
\label{def:regulator-theory}
	The mechanized substrate also provides a concrete syntactic instance. A \textsc{Regulator Theory} is a pair
	\begin{equation}
		R \;\Defeq\; (\pi,\T),
	\end{equation}
	where $\pi$ selects either the minimal \textsc{K}/\textsc{S} profile or the \textsc{K}/\textsc{S} profile extended with the explicit \textsc{EFQ} schema, and $\T$ is a Boolean axiom-membership function over object formulas. The interface need not carry a finite enumeration; the mechanization provides \texttt{FiniteAxiomSet} separately when the axiom source itself must be finite data. A context $\Gamma$ supplies local assumptions but is not part of the theory. Together, $R$ and $\Gamma$ induce the closure predicate
	\begin{equation}
		C_{R,\Gamma}(A) \;\Defeq\; \Gamma \vdash_R A,
	\end{equation}
	where $\vdash_R$ denotes checked derivability in $R$. This gives one family of closure predicates to which the abstract vocabulary may be applied. The obstruction theorem itself remains parametric in $C$ and does not inspect the checker or assume that every $C$ arises from a \textsc{Regulator Theory}.
\end{definition}

\begin{proposition}[Checked derivability is closed under detachment]
\label{prop:m001-checked-mp}
	For every \textsc{Regulator Theory} $R$ and context $\Gamma$, the induced closure predicate satisfies
	\begin{equation}
		\MP[C_{R,\Gamma}].
	\end{equation}
\end{proposition}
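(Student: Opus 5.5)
The plan is to reduce the statement to the existence of a derivation-combination operation on checked certificates. By Definition~\ref{def:regulator-theory}, $C_{R,\Gamma}(A)$ means $\Gamma\vdash_R A$, so unfolding $\MP[C_{R,\Gamma}]$ leaves the following goal. Given $\Gamma\vdash_R A\Imp B$ and $\Gamma\vdash_R A$, produce $\Gamma\vdash_R B$.

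The argument depends on how checked derivability is realized in \texttt{M001}. There, finite proof scripts are the instructions and a Boolean checker validates them. I would therefore take $\Gamma\vdash_R A$ to mean that there exists a script $p$ whose checker run against $(\pi,\T,\Gamma)$ succeeds with conclusion $A$.

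First, I destructure both hypotheses. This yields scripts $p_1$ concluding $A\Imp B$ and $p_2$ concluding $A$.

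Second, I form the composite script $p_1\mathbin{+\!\!+}p_2\mathbin{+\!\!+}[\mathsf{mp}(i,j)]$, or the corresponding tree node $\mathsf{MP}(p_1,p_2)$ if scripts are trees. Here the detachment step references the final lines of the two subscripts.

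Third, I show that the checker accepts the composite. This is where the main work lies. It needs a \emph{monotonicity/weakening lemma}: checking a prefix succeeds and is unaffected by later lines, and lines from $p_2$ remain valid when shifted by $|p_1|$. I would prove this by induction on the script, with index arithmetic for the shifted references. Axiom lines for $\pi$ (\textsc{K}, \textsc{S}, optionally \textsc{EFQ}), $\T$-membership lines and $\Gamma$-assumption lines are position independent. Earlier \textsc{MP} lines need their references shifted, which requires a re-indexing lemma.

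Fourth, I check the final line. Its premises sit at the ends of the two blocks and have conclusions $A\Imp B$ and $A$, so the checker's \textsc{MP} rule fires. The rule tests syntactic equality of formulas, and Boolean formula equality has to be reflected to propositional equality; I would get this from decidable equality on $\Formula$. The conclusion of the composite is then $B$.

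I expect the hardest step to be the re-indexing in step three, if scripts are linear with back-references. If the substrate instead represents derivations as trees, or the checker is defined compositionally, this step collapses. Then the proof is one application of the modus ponens constructor, followed by the soundness direction of the checker's reflection lemma. So I would first look for, or prove, a reflection lemma relating the Boolean checker to an inductive derivability relation with an explicit \textsc{MP} constructor, and then prove the proposition through that inductive relation. Notably, the argument is uniform in $\pi$, $\T$ and $\Gamma$ and uses no property of \textsc{K} or \textsc{S}.
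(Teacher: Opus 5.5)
Your proposal is correct and matches the paper's own argument: unfold $C_{R,\Gamma}$ to checked derivability, extract the two proof scripts, and compose them with the checker's \textsc{Modus Ponens} constructor to get a checked derivation of the conclusion, using no property of \textsc{K} or \textsc{S}. The paper states this only at the level of script composition (mechanized as \texttt{regulator\_theory\_checked\_derivable\_mp\_lemma}); your re-indexing and reflection bookkeeping is implementation detail beneath that step, not a different route.
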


\begin{proof}
	Suppose $C_{R,\Gamma}(A\Imp D)$ and $C_{R,\Gamma}(A)$. These are checked derivations of $A\Imp D$ and $A$ in the same theory and context. Composing their proof scripts with the checker’s \textsc{Modus Ponens} constructor yields a checked derivation of $D$, hence $C_{R,\Gamma}(D)$.
\end{proof}

\begin{proposition}[Checked negation fixed-points collapse]
\label{prop:m001-checked-negfp-collapse}
	For every \textsc{Regulator Theory} $R$, context $\Gamma$, and formula $B$,
	\begin{equation}
		\NegFP[C_{R,\Gamma}]{B} \;\longrightarrow\; C_{R,\Gamma}(\Bott).
	\end{equation}
\end{proposition}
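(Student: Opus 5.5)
The plan is to derive $C_{R,\Gamma}(\Bott)$ from $C_{R,\Gamma}(B\Imp\Neg B)$ and $C_{R,\Gamma}(\Neg B\Imp B)$ alone. No excluded middle is available here, so the work must be done inside the \textsc{K}/\textsc{S} Hilbert system supplied by $R$. Detachment is given by Proposition~\ref{prop:m001-checked-mp}. The extra ingredient is the intuitionistically valid implication
\begin{equation*}
	(B\Imp\Neg B)\Imp\Neg B,
\end{equation*}
that is, $(B\Imp B\Imp\Bott)\Imp B\Imp\Bott$, which is contraction. It is derivable from \textsc{K} and \textsc{S} alone, so both profiles $\pi$ (with or without \textsc{EFQ}) suffice. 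It is also a closed theorem, so it holds in every context $\Gamma$.

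The steps, in order, are as follows.
\begin{enumerate}
	\item Derive the contraction schema $(X\Imp X\Imp Y)\Imp X\Imp Y$ in the checker for arbitrary $X,Y$. The intended route is:
	\begin{enumerate}
		\item Take the \textsc{S} instance $(X\Imp(X\Imp Y))\Imp(X\Imp X)\Imp(X\Imp Y)$.
		\item Obtain $\vdash X\Imp X$ via the usual $\textsc{S}\,\textsc{K}\,\textsc{K}$ derivation.
		\item Combine the two with a further \textsc{S}/\textsc{K} step that permutes the hypotheses, turning $(X\Imp X\Imp Y)\Imp(X\Imp X)\Imp X\Imp Y$ into $(X\Imp X\Imp Y)\Imp X\Imp Y$. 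Concretely, apply \textsc{S} to the \textsc{S} instance and to $\textsc{K}(X\Imp X)$ lifted under the hypothesis.
	\end{enumerate}
	Then instantiate $X:=B$ and $Y:=\Bott$.
	\item Detach against $C_{R,\Gamma}(B\Imp\Neg B)$ to obtain $C_{R,\Gamma}(\Neg B)$.
	\item Detach $C_{R,\Gamma}(\Neg B\Imp B)$ against $C_{R,\Gamma}(\Neg B)$ to obtain $C_{R,\Gamma}(B)$.
	\item Detach $C_{R,\Gamma}(\Neg B)$, which is $B\Imp\Bott$, against $C_{R,\Gamma}(B)$ to obtain $C_{R,\Gamma}(\Bott)$.
\end{enumerate}
Each detachment in steps 2 to 4 is an instance of Proposition~\ref{prop:m001-checked-mp}.

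The main obstacle is step 1: producing an explicit checked proof script for contraction. This is routine combinatory logic, essentially writing $\lambda f x.\,f\,x\,x = \textsc{S}\,\textsc{S}\,(\textsc{K}\,\textsc{I})$ as a Hilbert derivation. It requires correctly sequencing roughly a dozen axiom instances and modus ponens steps, and it depends on the checker accepting theorems in arbitrary contexts, which weakening by context in $\vdash_R$ should give. If the substrate provides a deduction theorem, I would use it instead: assume $B\Imp\Neg B$ and $B$, detach twice to get $\Bott$, then discharge $B$ and the first hypothesis.

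This is also where the checked setting is genuinely stronger than the abstract one. A bare $C$ satisfying only \MP{} has no access to contraction, which is why the abstract obstruction needs \LocalLEM{B} while this proposition does not.
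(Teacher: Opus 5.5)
Your proof is correct and follows the paper's route. An \textsc{S}-instance together with the \textsc{SKK} identity $B\Imp B$ extracts $C_{R,\Gamma}(\Neg B)$ from the first half of the fixed-point; then $\Neg B\Imp B$ gives $C_{R,\Gamma}(B)$, and a last detachment gives $C_{R,\Gamma}(\Bott)$.

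The only difference is that the paper never proves contraction as a closed theorem. It detaches your step~1(a) instance directly against the accepted premise $C_{R,\Gamma}(B\Imp\Neg B)$ and then against $C_{R,\Gamma}(B\Imp B)$, so your step~1(c), which you flagged as the main obstacle, is unnecessary.
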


\begin{proof}
	Write $P:=B\Imp(B\Imp\Bott)$. Every \textsc{K}/\textsc{S} profile derives the identity $B\Imp B$ and the \textsc{S}-instance
	\begin{equation}
		P\Imp\bigl((B\Imp B)\Imp(B\Imp\Bott)\bigr).
	\end{equation}
	The first half of $\NegFP[C_{R,\Gamma}]{B}$ is $C_{R,\Gamma}(P)$. Two checked detachments therefore give $C_{R,\Gamma}(B\Imp\Bott)$, that is, $C_{R,\Gamma}(\Neg B)$. The reverse fixed-point implication $C_{R,\Gamma}(\Neg B\Imp B)$ then gives $C_{R,\Gamma}(B)$, and one final detachment yields $C_{R,\Gamma}(\Bott)$.
\end{proof}

\section{Obstruction}
\label{sec:l001}

\subsection{Local collapse}
\label{sec:l001:collapse}

\begin{theorem}[Branchwise goal-relative collapse]
\label{thm:branchwise-goal-relative-collapse}
	Let $B,G : \Formula$. Assume
	\begin{equation}
		\Det{B}{B\Imp G}, \quad \Det{B\Imp G}{B}, \quad \Det{B}{G},
	\end{equation}
	and
	\begin{equation}
		\bigl(C(B)\land C(B\Imp(B\Imp G))\bigr)
		\;\lor\;
		\bigl(C(B\Imp G)\land C((B\Imp G)\Imp B)\bigr).
	\end{equation}
	Then
	\begin{equation}
		C(G).
	\end{equation}
\end{theorem}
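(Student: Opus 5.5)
The plan is a direct case analysis on the assumed disjunction. In each branch I derive both $C(B)$ and $C(B\Imp G)$, and then a single application of $\Det{B}{G}$ yields $C(G)$. No closure property of $C$ beyond the three named detachment instances is used. Nothing about the syntax of $G$ is needed either: the argument treats $B\Imp G$ as an opaque formula. The statement is therefore purely propositional in the acceptance facts, which is why it can later be specialised to $G:=\Bott$, where $B\Imp G$ is $\Neg B$.

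\emph{First branch.} Assume $C(B)$ and $C(B\Imp(B\Imp G))$. Applying $\Det{B}{B\Imp G}$ to the accepted implication $B\Imp(B\Imp G)$ and to $C(B)$ gives $C(B\Imp G)$. We now hold $C(B)$ and $C(B\Imp G)$, so $\Det{B}{G}$ gives $C(G)$.

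\emph{Second branch.} Assume $C(B\Imp G)$ and $C((B\Imp G)\Imp B)$. Applying $\Det{B\Imp G}{B}$ to these two facts gives $C(B)$. Together with the assumed $C(B\Imp G)$, the instance $\Det{B}{G}$ again gives $C(G)$.

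Since both branches conclude $C(G)$, elimination of the disjunction finishes the proof. This step is valid in the constructive metatheory, because we only eliminate an assumed disjunction and never decide one.

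There is no substantive obstacle. The only point needing care is bookkeeping: each branch must use exactly the detachment instance that matches its fixed-point direction, namely $B\Imp(B\Imp G)$ with the assumption $C(B)$, and $(B\Imp G)\Imp B$ with the assumption $C(B\Imp G)$. Getting this pairing right is what lets the later obstruction theorem name only the instances it actually uses.
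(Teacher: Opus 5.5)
Your proof is correct and follows the paper's argument exactly. You eliminate the disjunction, obtain the missing acceptance fact via $\Det{B}{B\Imp G}$ in the first branch and via $\Det{B\Imp G}{B}$ in the second, and close both branches with $\Det{B}{G}$.
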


\begin{proof}
	In the first branch, \Det{B}{B\Imp G} gives $C(B\Imp G)$, and \Det{B}{G} then gives $C(G)$. In the second branch, \Det{B\Imp G}{B} gives $C(B)$, after which \Det{B}{G} again gives $C(G)$. Thus each branch uses only its corresponding fixed-point implication, one branch-specific detachment instance, and the final instance \Det{B}{G}.
\end{proof}

\begin{theorem}[Goal-relative branch collapse]
\label{thm:goal-relative-branch-collapse}
	Let $B,G : \Formula$. Assume \MP{},
	\begin{equation}
		\Eqv{B}{B\Imp G},
	\end{equation}
	and the local branch
	\begin{equation}
		C(B)\lor C(B\Imp G).
	\end{equation}
	Then
	\begin{equation}
		C(G).
	\end{equation}
\end{theorem}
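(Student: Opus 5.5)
The plan is to reduce this statement to Theorem~\ref{thm:branchwise-goal-relative-collapse}, instantiated at the same $B$ and $G$. That theorem needs three local detachment instances and a disjunction whose branches pair an acceptance fact with the aligned fixed-point implication. Both kinds of hypothesis can be read off from \MP{} and from $\Eqv{B}{B\Imp G}$.

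First, obtain the three detachment instances. By Definition~\ref{def:local-detachment}, \MP{} entails $\Det{A}{D}$ for all $A,D$. Specializing gives
\begin{equation}
	\Det{B}{B\Imp G}, \quad \Det{B\Imp G}{B}, \quad \Det{B}{G}.
\end{equation}
Each instance follows by currying the conjunctive antecedent of \MP{}.

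Second, unfold $\Eqv{B}{B\Imp G}$ by Definition~\ref{def:equiv}. This yields $C(B\Imp(B\Imp G))$ and $C((B\Imp G)\Imp B)$. Then case on the local branch $C(B)\lor C(B\Imp G)$:
\begin{itemize}
	\item If $C(B)$ holds, pair it with $C(B\Imp(B\Imp G))$ to obtain the left disjunct of the hypothesis of Theorem~\ref{thm:branchwise-goal-relative-collapse}.
	\item If $C(B\Imp G)$ holds, pair it with $C((B\Imp G)\Imp B)$ to obtain the right disjunct.
\end{itemize}
This is a constructive disjunction elimination followed by disjunction introduction, so no classical reasoning in the metatheory is needed. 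Applying Theorem~\ref{thm:branchwise-goal-relative-collapse} then yields $C(G)$.

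There is no real obstacle here. The only point needing care is bookkeeping: each branch must be paired with the fixed-point direction aligned to it. The direction $B\Imp(B\Imp G)$ goes with $C(B)$, and $(B\Imp G)\Imp B$ goes with $C(B\Imp G)$. With that pairing, the reduction follows the intended branch structure rather than using both directions in each case.

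Alternatively, one could argue directly without the previous theorem. In the first branch, two applications of \MP{} give $C(B\Imp G)$ and then $C(G)$. In the second branch, one application of \MP{} gives $C(B)$, and one more gives $C(G)$. I would nevertheless present the reduction, since it makes explicit that only those detachment instances are used.
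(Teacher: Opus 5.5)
Your proposal is correct and matches the paper's proof: \MP{} supplies the three detachment instances, the closure equivalence supplies the two oriented implications, and casing on the local branch selects the matching disjunct of Theorem~\ref{thm:branchwise-goal-relative-collapse}. The direct two-detachment alternative you sketch is just that theorem's own proof written inline.
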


\begin{proof}
	The hypothesis \MP{} supplies the three local detachment instances in Theorem~\ref{thm:branchwise-goal-relative-collapse}, while the closure equivalence supplies both oriented implications. If $C(B)$ holds, use the first branch of that theorem; if $C(B\Imp G)$ holds, use the second.
\end{proof}

\begin{theorem}[Local branch collapse]
\label{thm:local-branch-collapse}
	Assume \MP{}, $\NegFP{B}$, and \LocalLEM{B}. Then
	\begin{equation}
		C(\Bott).
	\end{equation}
\end{theorem}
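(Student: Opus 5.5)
The plan is to obtain Theorem~\ref{thm:local-branch-collapse} as the special case $G := \Bott$ of Theorem~\ref{thm:goal-relative-branch-collapse}. This works because, by Definition~\ref{def:syntax}, $\Neg B$ is by definition the formula $B\Imp\Bott$. The three hypotheses of the present statement therefore unfold as follows.

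\begin{itemize}
\item $\NegFP{B}$ is by Definition~\ref{def:negfp} exactly $\Eqv{B}{\Neg B}$, that is, $C(B\Imp(B\Imp\Bott))\land C((B\Imp\Bott)\Imp B)$. This is literally $\Eqv{B}{B\Imp G}$ with $G=\Bott$.
\item $\LocalLEM{B}$ is $C(B)\lor C(\Neg B)$, which is literally the local branch $C(B)\lor C(B\Imp G)$ at $G=\Bott$.
\item \MP{} is passed through unchanged.
\end{itemize}

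Theorem~\ref{thm:goal-relative-branch-collapse} then yields $C(G)=C(\Bott)$, as required.

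If a more explicit argument is preferred, I would unfold this through Theorem~\ref{thm:branchwise-goal-relative-collapse}, doing case analysis on $\LocalLEM{B}$.

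\begin{itemize}
\item \textbf{Case $C(B)$.} Detach along $C(B\Imp\Neg B)$ to get $C(\Neg B)$. Then detach $C(B\Imp\Bott)$ against $C(B)$ to get $C(\Bott)$.
\item \textbf{Case $C(\Neg B)$.} Detach along $C(\Neg B\Imp B)$ to get $C(B)$. Then detach as in the first case to get $C(\Bott)$.
\end{itemize}

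Each detachment is an instance of \MP{}, namely $\Det{B}{\Neg B}$, $\Det{\Neg B}{B}$ and $\Det{B}{\Bott}$. The case analysis is on a disjunction already given as a hypothesis, so the argument is constructive.

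There is no real obstacle. The only point needing care is that the instantiation $G:=\Bott$ matches the hypotheses on the nose. This relies on $\Neg$ being a notation (definitional abbreviation) rather than a separate constructor, and by \eqref{eq:syntax} it is one. Consequently no rewriting is needed: Lemma~\ref{lem:no-self-neg} plays no role, and no congruence properties of $\simeq_C$ are used, since there are none.
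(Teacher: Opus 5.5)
Your proposal is correct and takes the same route as the paper: you instantiate Theorem~\ref{thm:goal-relative-branch-collapse} at $G:=\Bott$, and since $\Neg B$ abbreviates $B\Imp\Bott$, the hypotheses match exactly. Your explicit case analysis also reproduces correctly the three detachment steps of Theorem~\ref{thm:branchwise-goal-relative-collapse}.
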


\begin{proof}
	Apply Theorem~\ref{thm:goal-relative-branch-collapse} with $G:=\Bott$. Since $\Neg B$ abbreviates $B\Imp\Bott$, its closure-equivalence and local-branch hypotheses are exactly $\NegFP{B}$ and \LocalLEM{B}.
\end{proof}

\begin{theorem}[Fixed-point gap]
\label{thm:fixedpoint-gap}
	Assume \MP{}, \Cons{}, and $\NegFP{B}$. Then
	\begin{equation}
		\neg C(B)\;\land\;\neg C(\Neg B).
	\end{equation}
\end{theorem}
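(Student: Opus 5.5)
The plan is to prove each conjunct separately by reducing it to Theorem~\ref{thm:local-branch-collapse}. Since both conjuncts are negations, each is proved constructively by assuming the positive statement and deriving $\bot$. No case analysis on $C(B)$ is required, so the argument stays inside the constructive metatheory.

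For the first conjunct, assume $C(B)$. Then the left disjunct of \LocalLEM{B} holds, so \LocalLEM{B} is available. Together with the hypotheses \MP{} and $\NegFP{B}$, Theorem~\ref{thm:local-branch-collapse} yields $C(\Bott)$, and \Cons{} turns this into $\bot$. Hence $\neg C(B)$.

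For the second conjunct, assume $C(\Neg B)$. This is the right disjunct of \LocalLEM{B}, so the same application of Theorem~\ref{thm:local-branch-collapse} followed by \Cons{} gives $\bot$, and therefore $\neg C(\Neg B)$. One could instead unfold the detachments directly: from $C(B)$, detach along $C(B\Imp\Neg B)$ to get $C(\Neg B)$, then detach $\Neg B = B\Imp\Bott$ against $C(B)$ to get $C(\Bott)$; symmetrically from $C(\Neg B)$ via $C(\Neg B\Imp B)$. Routing through the local collapse theorem is cleaner, however.

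There is no real obstacle here. The only point needing care is to observe that each single positive hypothesis is enough to inhabit the disjunction \LocalLEM{B}, so the gap uses no instance of \LEM{} at all. It is this gap that shows the obstruction to \LEM{} sits at the single formula $B$.
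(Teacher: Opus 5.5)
Your proposal is correct and matches the paper's proof: each conjunct is obtained by letting the assumed acceptance fact inhabit the corresponding side of \LocalLEM{B}, applying Theorem~\ref{thm:local-branch-collapse} to obtain $C(\Bott)$, and then contradicting \Cons{}. Your alternative of unfolding this into the two explicit detachments is also valid; it is exactly what the local collapse theorem packages.
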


\begin{proof}
	If $C(B)$ held, it would supply the left side of \LocalLEM{B}; Theorem~\ref{thm:local-branch-collapse} would then give $C(\Bott)$, contrary to \Cons{}. The same argument from $C(\Neg B)$ uses the right side of \LocalLEM{B}. Hence neither branch is accepted.
\end{proof}

\begin{theorem}[\textsc{Aporetic Lemma}]
\label{thm:core-diagonal-obstruction}
	A closure-equivalence negation fixed-point, detachment, consistency, and local bivalence at the fixed-point are incompatible,
	\begin{equation}
	\MP{} \land \Cons{} \land \NegFP{B} \land \LocalLEM{B} \implies \bot.
	\end{equation}
\end{theorem}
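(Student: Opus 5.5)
The plan is to obtain the contradiction directly from Theorem~\ref{thm:local-branch-collapse} together with \Cons{}. Assume \MP{}, \Cons{}, $\NegFP{B}$, and \LocalLEM{B}. The first, third, and fourth of these are exactly the hypotheses of Theorem~\ref{thm:local-branch-collapse}, so that theorem yields $C(\Bott)$. Since \Cons{} is by definition the implication $C(\Bott)\to\bot$, applying it to this acceptance fact produces $\bot$, which is the required conclusion.

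An equivalent route, which I would mention as a cross-check, goes through Theorem~\ref{thm:fixedpoint-gap}. From \MP{}, \Cons{}, and $\NegFP{B}$ it gives $\neg C(B)\land\neg C(\Neg B)$. We then case-split on \LocalLEM{B}: the branch $C(B)$ contradicts the first conjunct, and the branch $C(\Neg B)$ contradicts the second. Both routes stay constructive. The only case analysis is elimination of the given disjunction \LocalLEM{B}, and no excluded middle is used in the metatheory.

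There is no real obstacle; the statement merely packages earlier results. The one point to check is bookkeeping. Since $\Neg B$ abbreviates $B\Imp\Bott$, the instantiation $G:=\Bott$ used inside Theorem~\ref{thm:local-branch-collapse} matches $\NegFP{B}$ and \LocalLEM{B} syntactically, and the proof should not require any unfolding beyond that.
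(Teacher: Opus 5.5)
Your proposal is correct. Your cross-check route is the paper's proof: Theorem~\ref{thm:fixedpoint-gap} followed by a case split on \LocalLEM{B}. Your primary route simply inlines that step, feeding the given disjunction directly into Theorem~\ref{thm:local-branch-collapse} and then applying \Cons{}; it is equally constructive and marginally shorter, since the gap theorem is itself obtained by applying the local collapse to each disjunct.
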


\begin{proof}
	Theorem~\ref{thm:fixedpoint-gap} gives $\neg C(B)$ and $\neg C(\Neg B)$. Either side of \LocalLEM{B} therefore yields $\bot$.
\end{proof}

\begin{corollary}[\textsc{Excluded Middle} fails at a negation fixed-point]
\label{cor:fixedpoint-lem-obstruction}
	Assume \MP{}, \Cons{}, and $\NegFP{B}$. Then
	\begin{equation}
		\neg\LEM{}.
	\end{equation}
\end{corollary}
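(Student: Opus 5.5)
The plan is to derive the corollary directly from Theorem~\ref{thm:core-diagonal-obstruction} by specializing the global principle to the fixed-point. Since $\neg\LEM{}$ is by definition $\LEM{}\to\bot$, and negation introduction is constructively valid, no classical reasoning is required in the metatheory.

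The steps, in order, are as follows.
\begin{enumerate}
	\item Fix $C$ and $B$ satisfying \MP{}, \Cons{}, and $\NegFP{B}$, and assume $\LEM{}$ toward $\bot$.
	\item Instantiate the universal quantifier in $\LEM{}$ at $A:=B$. This gives $C(B)\lor C(\Neg B)$, which is literally $\LocalLEM{B}$, as noted after Definition~\ref{def:local-lem}.
	\item The conjunction $\MP{}\land\Cons{}\land\NegFP{B}\land\LocalLEM{B}$ is now available. Theorem~\ref{thm:core-diagonal-obstruction} then yields $\bot$, which discharges the assumption and proves $\neg\LEM{}$.
\end{enumerate}

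As an alternative to the last step, one could apply Theorem~\ref{thm:fixedpoint-gap} to obtain $\neg C(B)$ and $\neg C(\Neg B)$. A case split on the instance $C(B)\lor C(\Neg B)$ then refutes each disjunct, and the resulting proof makes visible that only the single instance at $B$ is used.

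I expect no genuine obstacle here. The only point worth stating explicitly is that the argument stays constructive. We never need $\neg\LEM{}$ to produce a witness formula at which $\LEM{}$ fails; we only refute the universal statement by exhibiting one refuted instance. That instance is at $B$, already supplied by the hypothesis $\NegFP{B}$.
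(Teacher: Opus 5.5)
Your proof is correct and matches the paper's argument: assume \LEM{}, specialize it at $B$ to get \LocalLEM{B}, and apply Theorem~\ref{thm:core-diagonal-obstruction} to reach $\bot$. Your alternative via Theorem~\ref{thm:fixedpoint-gap} just inlines the proof of that theorem, so it is equally valid.
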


\begin{proof}
	Suppose \LEM{} holds. Specializing it at $B$ gives \LocalLEM{B}. Theorem~\ref{thm:core-diagonal-obstruction} then yields $\bot$.
\end{proof}

\begin{corollary}[Consistent checked regulators have no negation fixed-point]
\label{cor:checked-regulator-obstruction}
	For every \textsc{Regulator Theory} $R$ and context $\Gamma$, if $C_{R,\Gamma}$ is consistent, then
	\begin{equation}
		\neg\exists B : \Formula,\; \NegFP[C_{R,\Gamma}]{B}.
	\end{equation}
\end{corollary}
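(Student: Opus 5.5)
The plan is to reduce the claim directly to Proposition~\ref{prop:m001-checked-negfp-collapse}. That proposition already delivers $C_{R,\Gamma}(\Bott)$ from $\NegFP[C_{R,\Gamma}]{B}$ for any single formula $B$, and it does so without using \LocalLEM{B}. The route through Theorem~\ref{thm:core-diagonal-obstruction} would need a bivalence instance we are not given. The corollary therefore needs no excluded-middle hypothesis at all: the \textsc{K}/\textsc{S} structure of checked derivability does the work that \LocalLEM{B} did in the abstract setting.

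Concretely, I would argue constructively, proving a negation by assuming its body and deriving $\bot$. Fix $R$ and $\Gamma$ and assume $\Cons[C_{R,\Gamma}]$, that is, $C_{R,\Gamma}(\Bott)\to\bot$. Suppose $\exists B : \Formula,\; \NegFP[C_{R,\Gamma}]{B}$. Eliminating the existential, which is permissible since the goal $\bot$ is a proposition, gives a witness $B$ with $C_{R,\Gamma}(B\Imp\Neg B)$ and $C_{R,\Gamma}(\Neg B\Imp B)$. Apply Proposition~\ref{prop:m001-checked-negfp-collapse} to $R$, $\Gamma$, $B$ and this fixed-point hypothesis to get $C_{R,\Gamma}(\Bott)$. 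Consistency then yields $\bot$, closing the argument.

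There is essentially no obstacle once Proposition~\ref{prop:m001-checked-negfp-collapse} is taken as given. The only point needing care is that the proposition is stated for an arbitrary profile $\pi$, including the minimal \textsc{K}/\textsc{S} one without \textsc{EFQ}. I would check that its proof uses only the identity $B\Imp B$ (derivable from \textsc{K} and \textsc{S}), the \textsc{S}-instance, and detachment via Proposition~\ref{prop:m001-checked-mp}, so that it applies uniformly to every profile and context.

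I would also note a contrast with the abstract setting. Corollary~\ref{cor:fixedpoint-lem-obstruction} only refutes \LEM{} given a fixed-point. In the checked setting the fixed-point itself is refuted outright, because the one-directional implication $B\Imp\Neg B$ alone already yields $\Neg B$ by contraction.
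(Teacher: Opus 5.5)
Your proposal is correct and matches the paper's argument: a witness $B$ gives $C_{R,\Gamma}(\Bott)$ by Proposition~\ref{prop:m001-checked-negfp-collapse}, and consistency turns that into $\bot$. One precision for your closing remark: contraction on $B\Imp\Neg B$ yields only $C_{R,\Gamma}(\Neg B)$. Reaching $C_{R,\Gamma}(\Bott)$ still needs the reverse implication $\Neg B\Imp B$ and two more detachments, so the refutation uses both halves of the fixed-point.
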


\begin{proof}
	Any such $B$ would give $C_{R,\Gamma}(\Bott)$ by Proposition~\ref{prop:m001-checked-negfp-collapse}, contrary to consistency.
\end{proof}

Thus the abstract four-way obstruction is nontrivial at the level of arbitrary closure predicates, whereas the \textsc{K}/\textsc{S} checked instance has a stricter boundary: its fixed-point premise already conflicts with consistency. Neither \texttt{M001} nor \texttt{L001} constructs an instance of $\mathsf{Frame}_{\Bott}(C_{R,\Gamma})$; by the corollary, no consistent checked instance can do so. Checked derivability carries structural principles not assumed by the abstract \texttt{L001} theorem.

\subsection{Repackaging a fixed-point through a supplied goal frame}
\label{sec:l001:frame-extraction}

\begin{definition}[Supplied goal frame]
\label{def:eval-goal-frame}
	Fix an arbitrary type $\mathsf{Code}$ and an operation
	\begin{equation}
		\mathsf{eval} : \mathsf{Code} \to \mathsf{Code} \to \Formula.
	\end{equation}
	For a fixed goal formula $G$, the frame asserts the behavior
	\begin{equation}
		x \mapsto \evapp{x}{x} \Imp G.
	\end{equation}
	We write the frame as
	\begin{equation}
		\label{eq:eval-goal-frame}
		\mathsf{Frame}_G(C) \;\Defeq\; \exists c\; \forall x\; \Eqv{\evapp{c}{x}}{\evapp{x}{x} \Imp G}.
	\end{equation}
	The obstruction uses the case $G=\Bott$.
\end{definition}

\begin{remark}
	Conversely, given any $B$ with $\Eqv{B}{B\Imp G}$, take a singleton code type $\{\star\}$ and the constant operation
	\begin{equation}
	\evapp{\star}{\star}:=B,
	\end{equation}
	Then the resulting goal frame holds. Thus, with freely chosen code and evaluation data, goal frames and goal-relative fixed-points are interderivable.
\end{remark}

\begin{remark}
	Closure equivalence is essential. Replacing $\simeq_C$ by formula equality would produce $B=\Neg B$, contradicting Lemma~\ref{lem:no-self-neg}.
\end{remark}

\begin{theorem}[Self-specialization of a bottom goal frame]
\label{thm:eval-bottom-negfixp}
	Under $\mathsf{Frame}_{\Bott}(C)$,
	\begin{equation}
		\exists B : \Formula,\; \NegFP{B}.
	\end{equation}
\end{theorem}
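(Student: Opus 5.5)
The plan is the standard diagonal self-application. Unfold $\mathsf{Frame}_{\Bott}(C)$ to obtain a code $c : \mathsf{Code}$ such that
\begin{equation}
	\forall x : \mathsf{Code},\; \Eqv{\evapp{c}{x}}{\evapp{x}{x}\Imp\Bott}.
\end{equation}
Then specialize the universal quantifier at $x := c$ itself. This gives
\begin{equation}
	\Eqv{\evapp{c}{c}}{\evapp{c}{c}\Imp\Bott}.
\end{equation}
Set $B := \evapp{c}{c}$. Since $\Neg B$ is by definition $B\Imp\Bott$, the displayed equivalence is literally $\Eqv{B}{\Neg B}$, which is $\NegFP{B}$ by Definition~\ref{def:negfp}. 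We then discharge the existential with the witness $B$.

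The steps, in order, are as follows. First, eliminate the existential in the frame; this is legitimate in the constructive metatheory because the goal is a proposition. Second, instantiate at the same code $c$. Third, observe that the right-hand formula of the equivalence is definitionally $\Neg(\evapp{c}{c})$, so no rewriting under $C$ is needed. This matters because $C$ is a bare predicate and $\simeq_C$ carries no congruence, as noted after Definition~\ref{def:equiv}. Both conjuncts $C(B\Imp\Neg B)$ and $C(\Neg B\Imp B)$ are then supplied verbatim.

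No closure hypotheses (\MP{}, \Cons{}) are needed, and I expect no real obstacle. The only point requiring care is the third step: the self-application must produce syntactically the same formula on both sides. If the term were merely propositionally equal, we would need to transport along an equality of formulas inside $C$. That transport is still fine, since Leibniz equality rewrites under any predicate, but definitional unfolding of $\Neg$ avoids it entirely.
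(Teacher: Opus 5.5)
Your proof is correct and is the paper's argument: unpack $\mathsf{Frame}_{\Bott}(C)$ to obtain $c$, specialize at $x:=c$, set $B:=\evapp{c}{c}$, and use that $\Neg B$ is by definition $B\Imp\Bott$, so the instance is literally $\NegFP{B}$. Your added observation that no rewriting inside $C$ is needed, which matters because $\simeq_C$ carries no congruence, is accurate.
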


\begin{proof}
	Unpack $\mathsf{Frame}_{\Bott}(C)$ to obtain a code $c$ such that
	\begin{equation}
		\forall x,\; \Eqv{\evapp{c}{x}}{\evapp{x}{x}\Imp\Bott}.
	\end{equation}
	Set $B \Defeq \evapp{c}{c}$. Specializing the frame property at $x:=c$ gives
	\begin{equation}
		\Eqv{B}{B\Imp\Bott},
	\end{equation}
	which is exactly $\NegFP{B}$ because $\Neg B \Defeq B\Imp\Bott$.
\end{proof}

\begin{corollary}[Conditional obstruction from a bottom goal frame]
\label{cor:eval-bottom-lem-obstruction}
	Assume $\mathsf{Frame}_{\Bott}(C)$, \MP{}, and \Cons{}. Then
	\begin{equation}
		\neg\LEM{}.
	\end{equation}
\end{corollary}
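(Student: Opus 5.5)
The plan is to chain the two results that sit immediately above the statement: first extract a fixed-point from the frame, then apply the existing obstruction to it. No new closure reasoning is needed.

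First, apply Theorem~\ref{thm:eval-bottom-negfixp} to the hypothesis $\mathsf{Frame}_{\Bott}(C)$. This gives a witness $B : \Formula$ with $\NegFP{B}$; concretely, $B \Defeq \evapp{c}{c}$ for the code $c$ supplied by the frame.

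To prove $\neg\LEM{}$, assume $\LEM{}$ and derive $\bot$. Since the goal is a negation, hence a proposition, we may eliminate the existential over $B$ constructively, with no choice principle required. With $B$ in hand, invoke Corollary~\ref{cor:fixedpoint-lem-obstruction} using \MP{}, \Cons{}, and $\NegFP{B}$; it yields $\neg\LEM{}$ directly. Equivalently, we can specialize the assumed $\LEM{}$ at $B$ to get $\LocalLEM{B}$ and then apply Theorem~\ref{thm:core-diagonal-obstruction} to \MP{}, \Cons{}, $\NegFP{B}$, and $\LocalLEM{B}$.

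There is no substantive obstacle. The only point to check is bookkeeping: the frame instance at $x:=c$ must produce $\Eqv{B}{B\Imp\Bott}$ literally. It does, because $\Neg B$ is definitionally $B\Imp\Bott$, so the corollary's hypothesis matches without any rewriting. As in the abstract results, the argument uses only the instance of \LEM{} at the self-specialized formula $\evapp{c}{c}$.
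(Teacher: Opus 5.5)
Your proposal is correct and matches the paper's proof: extract a $B$ with $\NegFP{B}$ via Theorem~\ref{thm:eval-bottom-negfixp}, then apply Corollary~\ref{cor:fixedpoint-lem-obstruction} at that $B$. Your remark that the existential can be eliminated because the goal is a proposition is a correct constructive detail that the paper leaves implicit.
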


\begin{proof}
	Theorem~\ref{thm:eval-bottom-negfixp} extracts a formula $B$ with $\NegFP{B}$. Corollary~\ref{cor:fixedpoint-lem-obstruction} applies at that formula.
\end{proof}

\begin{figure}[H]
	\centering
	\begin{adjustbox}{max width=\linewidth}
	\begin{tikzpicture}[
		node distance=6mm and 12mm,
		every node/.style={font=\small},
		box/.style={draw, rounded corners, inner sep=4pt, align=center},
		hyp/.style={box},
		proofstep/.style={box},
		arr/.style={-{Latex[length=2mm]}, semithick}
		]

		\node[hyp] (evalbot) {$\mathsf{Frame}_{\Bott}(C)$};
		\node[proofstep, below=of evalbot] (existsnegfp)
		{$\exists B\,\NegFP{B}$};
		\node[hyp, below=of existsnegfp] (negfp) {$\NegFP{B}$};
		\node[hyp, left=of negfp] (locallem) {$\LocalLEM{B}$};
		\node[hyp, right=of negfp] (mp) {$\MP{}$};
		\node[proofstep, below=of negfp] (cbot) {$C(\Bott)$};
		\node[proofstep, below=of cbot] (bot) {$\bot$};

		\node[hyp, above=of locallem] (lem) {$\LEM{}$};
		\node[hyp, below=of locallem] (signed) {$\SClass{}$};
		\node[hyp, below=of signed] (membership)
		{$\MemDec{}+\RefComp{}$};
		\node[hyp, below=of membership] (refutation)
		{$\RefInd{}$ (inhabited)};

		\node[hyp, right=of cbot] (cons) {$\Cons{}$};

		\draw[arr] (evalbot) -- (existsnegfp);
		\draw[arr] (existsnegfp) -- (negfp);
		\draw[arr] (negfp) -- (cbot);
		\draw[arr] (locallem) -- (cbot);
		\draw[arr] (mp) -- (cbot);
		\draw[arr] (cbot) -- (bot);
		\draw[arr] (cons) -- (bot);
		\draw[arr] (lem) -- (locallem);
		\draw[arr] (signed) -- (locallem);
		\draw[arr] (membership) -- (signed);
	\end{tikzpicture}
	\end{adjustbox}
\caption{Dependency structure of the local-branch obstruction and its classification consequences.}
\label{fig:obstruction-dependency}
\end{figure}

\subsection{Further classification consequences}
\label{sec:l001:corollaries}

\begin{theorem}[Local signed-classification obstruction]
\label{thm:signed-obstruction}
	For any formula $B$, the following hypotheses are incompatible:
	\begin{equation}
		\label{eq:signed-obstruction}
		\NegFP{B} \;\land\; \MP \;\land\; \Cons \;\land\; \SClass \;\implies\; \bot.
	\end{equation}
\end{theorem}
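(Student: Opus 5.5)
The plan is to reduce the statement to the \textsc{Aporetic Lemma} (Theorem~\ref{thm:core-diagonal-obstruction}) by extracting the single instance \LocalLEM{B} from the signed classifier. Assume $\NegFP{B}$, \MP{}, \Cons{}, and \SClass{}. Unpacking \SClass{} yields a Boolean function $s : \Formula \to \{\mathsf{tt},\mathsf{ff}\}$ such that, for every $A$, $s(A)=\mathsf{tt}$ implies $C(A)$ and $s(A)=\mathsf{ff}$ implies $C(\Neg A)$. We specialize this property at the fixed-point $B$.

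Next, we perform a case analysis on the Boolean value $s(B)$. This case split is legitimate in the constructive metatheory because equality on $\{\mathsf{tt},\mathsf{ff}\}$ is decidable, and every Boolean is either $\mathsf{tt}$ or $\mathsf{ff}$. If $s(B)=\mathsf{tt}$, the first certificate gives $C(B)$, which is the left disjunct of \LocalLEM{B}. If $s(B)=\mathsf{ff}$, the second certificate gives $C(\Neg B)$, which is the right disjunct. Either way, \LocalLEM{B} holds.

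Finally, with \MP{}, \Cons{}, $\NegFP{B}$, and \LocalLEM{B} all in hand, Theorem~\ref{thm:core-diagonal-obstruction} yields $\bot$. Alternatively, one can bypass the disjunction: Theorem~\ref{thm:fixedpoint-gap} gives $\neg C(B)$ and $\neg C(\Neg B)$, and each Boolean branch then contradicts the corresponding conjunct directly.

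The only step needing care is the Boolean case split. It is the point where a computational verdict is turned into a propositional disjunction. No excluded middle on $C(B)$ itself is used, only the decidability of Boolean equality. This distinguishes \SClass{} from \MemDec{}, which would give only $\neg C(B)$ in the false branch and would require \RefComp{} to reach $C(\Neg B)$.
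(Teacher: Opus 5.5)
Your proposal is correct and matches the paper's proof: case analysis on $s(B)$ yields \LocalLEM{B}, and the \textsc{Aporetic Lemma} (Theorem~\ref{thm:core-diagonal-obstruction}) then gives $\bot$. Your alternative route through Theorem~\ref{thm:fixedpoint-gap} is equally valid, since that theorem is exactly what the \textsc{Aporetic Lemma}'s own proof uses.
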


\begin{proof}
	Let $s$ be the signed classifier and inspect $s(B)$. A true verdict supplies $C(B)$; a false verdict supplies $C(\Neg B)$. Thus \LocalLEM{B} holds, and Theorem~\ref{thm:core-diagonal-obstruction} applies.
\end{proof}

\begin{remark}
	The classifier is global as a function, but the proof reads it only once, at the fixed-point $B$. No uniform bivalence schema is required.
\end{remark}

\begin{theorem}[Local refutation-completeness obstruction]
\label{thm:refcomp-obstruction}
	Assume \MP{}, \Cons{}, and $\NegFP{B}$. Then
	\begin{equation}
		\neg\RefComp{}.
	\end{equation}
\end{theorem}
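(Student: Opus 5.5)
The proof reduces to the fixed-point gap together with a single application of refutation completeness at $B$. Assume \MP{}, \Cons{}, and $\NegFP{B}$, and suppose \RefComp{} holds toward a contradiction. Theorem~\ref{thm:fixedpoint-gap} already yields, under exactly these three hypotheses, both $\neg C(B)$ and $\neg C(\Neg B)$. Instantiating \RefComp{} at the formula $B$ and feeding it $\neg C(B)$ produces $C(\Neg B)$. This contradicts the second conjunct of the gap, so we obtain $\bot$ and therefore $\neg\RefComp{}$.

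A second route goes through Theorem~\ref{thm:core-diagonal-obstruction}. From \RefComp{} at $B$ together with $\neg C(B)$ we get $C(\Neg B)$, which is the right disjunct of \LocalLEM{B}. The Aporetic Lemma then yields $\bot$. I would state the direct gap-based version, mentioning that it uses \RefComp{} only at the single formula $B$. This parallels the remark after Theorem~\ref{thm:signed-obstruction}.

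Constructivity deserves a check. We never case-split on $C(B)$; we only use the already established negative fact $\neg C(B)$ as input to \RefComp{}. No classical reasoning enters.

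There is no real obstacle here. The only point requiring care is to apply \RefComp{} to $B$ itself, rather than to $\Neg B$, so that the output $C(\Neg B)$ lands directly on the second half of the gap. One could instead apply it at $\Neg B$, but that would produce $C(\Neg\Neg B)$ and would require further detachment steps not needed here.
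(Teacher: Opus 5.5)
Your proof is correct and is exactly the paper's argument: Theorem~\ref{thm:fixedpoint-gap} gives $\neg C(B)$ and $\neg C(\Neg B)$, and the instance of \RefComp{} at $B$ turns the first into $C(\Neg B)$, contradicting the second. Your alternative route through Theorem~\ref{thm:core-diagonal-obstruction} is also fine. One small correction to your closing aside: applying \RefComp{} only at $\Neg B$ would not close the argument even with extra detachments. The predicate accepting exactly $\Bott\Imp(\Bott\Imp\Bott)$ and $(\Bott\Imp\Bott)\Imp\Bott$ satisfies \MP{}, \Cons{}, and $\NegFP{\Bott}$, yet it already accepts $\Neg\Neg\Bott$.
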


\begin{proof}
	Theorem~\ref{thm:fixedpoint-gap} gives both $\neg C(B)$ and $\neg C(\Neg B)$. If \RefComp{} held, its instance at $B$ would turn $\neg C(B)$ into $C(\Neg B)$, contradicting the second conclusion. No membership decision is needed.
\end{proof}

\begin{lemma}[Membership decision plus refutation completeness gives signed classification]
\label{lem:membership-to-signed}
	If \MemDec{} and \RefComp{} hold, then \SClass{} holds.
\end{lemma}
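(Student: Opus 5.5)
The plan is to reuse the membership decider $m$ from \MemDec{} directly as the signed classifier $s$. Unpack \MemDec{} to obtain $m : \Formula \to \{\mathsf{tt},\mathsf{ff}\}$ with $m(A)=\mathsf{tt}\leftrightarrow C(A)$ for all $A$, and take $s := m$ as the witness for \SClass{}. It then remains to verify the two certificate clauses for an arbitrary formula $A$.

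For the true verdict, suppose $s(A)=\mathsf{tt}$. Then the forward direction of the membership biconditional yields $C(A)$ immediately.

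For the false verdict, suppose $s(A)=\mathsf{ff}$. We first derive $\neg C(A)$. Assume $C(A)$; the backward direction of the biconditional gives $m(A)=\mathsf{tt}$. Combined with $m(A)=\mathsf{ff}$, this gives $\mathsf{tt}=\mathsf{ff}$, which is refuted by discriminating the Boolean constructors. The instance of \RefComp{} at $A$ then turns $\neg C(A)$ into $C(\Neg A)$, as required.

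No step is a genuine obstacle. The only point needing care is that the argument must remain constructive. It never case-splits on $C(A)$ itself; it only inspects the given Boolean value $s(A)=m(A)$, and the negative information $\neg C(A)$ is obtained by refutation rather than by excluded middle. This is exactly where \RefComp{} supplies the bridge from non-membership to an accepted object-language negation, as anticipated in the remark following Definition~\ref{def:classification}.
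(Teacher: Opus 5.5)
Your proposal is correct and matches the paper's proof. You take the membership decider $m$ itself as the signed classifier, read off $C(A)$ from a true verdict, and on a false verdict refute $C(A)$ through the backward direction of the membership specification before applying \RefComp{} to obtain $C(\Neg A)$.
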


\begin{proof}
	Let $m$ decide membership. Use the same Boolean function as the signed classifier. A true verdict yields $C(A)$ by the membership specification. If $m(A)=\mathsf{ff}$, then $C(A)$ would force $m(A)=\mathsf{tt}$, so $\neg C(A)$; applying \RefComp{} yields $C(\Neg A)$.
\end{proof}

\begin{corollary}[Membership decision with refutation completeness is obstructed]
\label{thm:membership-obstruction}
	For any formula $B$, the following hypotheses are incompatible:
	\begin{equation}
		\label{eq:membership-obstruction}
	\begin{array}{c}
			\NegFP{B} \;\land\; \MP \;\land\; \Cons \;\land\; \RefComp \;\land\; \MemDec \\[0.75em]
			\Downarrow \\[0.75em]
			 \bot.
	\end{array}
	\end{equation}
\end{corollary}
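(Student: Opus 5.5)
The plan is to chain Lemma~\ref{lem:membership-to-signed} with Theorem~\ref{thm:signed-obstruction}. First, assume all five hypotheses $\NegFP{B}$, \MP{}, \Cons{}, \RefComp{}, and \MemDec{}. From \MemDec{} and \RefComp{}, Lemma~\ref{lem:membership-to-signed} produces \SClass{}: the membership decider $m$ itself serves as the signed classifier. This gives the four hypotheses $\NegFP{B}$, \MP{}, \Cons{}, \SClass{}, and Theorem~\ref{thm:signed-obstruction} then yields $\bot$. This mirrors the arrow from $\MemDec{}+\RefComp{}$ to \SClass{} in Figure~\ref{fig:obstruction-dependency}.

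An even more economical route ignores \MemDec{} entirely. Theorem~\ref{thm:refcomp-obstruction} already derives $\neg\RefComp{}$ from \MP{}, \Cons{}, and $\NegFP{B}$, which contradicts the assumed \RefComp{}. I would note this in a remark: the membership decision is idle in the contradiction, consistent with contribution (ii) that ordinary membership decision alone is not implicated. For the main proof, however, I would present the route through signed classification so that it matches the stated dependency structure.

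There is no genuine obstacle. The only point needing care is that the argument is constructive. The step from $m(B)=\mathsf{ff}$ to $\neg C(B)$ inside Lemma~\ref{lem:membership-to-signed} uses only the $\leftarrow$ direction of the membership biconditional together with $\mathsf{tt}\neq\mathsf{ff}$. Case analysis on the Boolean value $m(B)$ is decidable, so no excluded middle in the metatheory is used.
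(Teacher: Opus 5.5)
Your proposal is correct and matches the paper's proof. The paper gives the same two routes but leads with the economical one: Theorem~\ref{thm:refcomp-obstruction} refutes \RefComp{} directly, so \MemDec{} is idle. It then cites Lemma~\ref{lem:membership-to-signed} with Theorem~\ref{thm:signed-obstruction} as the equivalent alternative.
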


\begin{proof}
	Theorem~\ref{thm:refcomp-obstruction} already rules out \RefComp{} under the first three hypotheses. The additional membership decider records the classifier comparison but is not needed for the contradiction. Equivalently, Lemma~\ref{lem:membership-to-signed} and Theorem~\ref{thm:signed-obstruction} give the same corollary.
\end{proof}

\begin{theorem}[Refutation-sound Boolean indicators are trivially inhabited]
\label{thm:ref-not-obstructed}
	For every $C : \Formula \to \mathsf{Prop}$, $\RefInd[C]$ is inhabited. Namely, there exists
	\begin{equation}
		r : \Formula \to \{\mathsf{tt},\mathsf{ff}\} \quad\text{such that}\quad \forall A : \Formula,\; r(A) = \mathsf{ff},
	\end{equation}
	and the branch soundness condition $r(A) = \mathsf{tt} \to C(\Neg A)$ holds vacuously.
\end{theorem}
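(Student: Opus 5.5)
The plan is to exhibit the witness directly and check the defining property of \RefInd[C] by case analysis on Booleans, with no appeal to any earlier structural hypothesis on $C$. Take $r$ to be the constant function $r(A) := \mathsf{ff}$ for every $A : \Formula$. This is a closed term of type $\Formula \to \{\mathsf{tt},\mathsf{ff}\}$, so it exists in the constructive metatheory without any choice or decidability assumption about $C$. In particular, we do not need \MP{}, \Cons{}, \NegFP{B}, or any information about which formulas $C$ accepts.

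Next, fix an arbitrary $A$ and assume $r(A) = \mathsf{tt}$. By definition $r(A)$ reduces to $\mathsf{ff}$, so the hypothesis becomes $\mathsf{ff} = \mathsf{tt}$. The two Boolean constructors are distinct; in \textsc{Rocq} this is discharged by \texttt{discriminate}, or informally by a function sending $\mathsf{tt}$ to a true proposition and $\mathsf{ff}$ to $\bot$. From this absurdity we obtain $C(\Neg A)$ by ex falso in the metatheory. Since $A$ was arbitrary, $r$ satisfies the branch-soundness clause, and packing $r$ with this proof inhabits the existential in \RefInd[C]. The auxiliary claim $\forall A,\; r(A)=\mathsf{ff}$ in the statement holds by reflexivity.

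There is no real obstacle. The only point needing care is to make explicit that metatheoretic explosion from $\mathsf{ff}=\mathsf{tt}$ is used, and not any object-level \textsc{EFQ} for $C$. This matters because $C$ is an arbitrary predicate with no closure assumed. I would close with a brief remark contrasting this result with Theorem~\ref{thm:refcomp-obstruction}: the difference is precisely that \RefInd{} imposes no completeness obligation at the fixed-point $B$, so the gap of Theorem~\ref{thm:fixedpoint-gap} is never forced to close.
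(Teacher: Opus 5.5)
Your proposal is correct and follows essentially the same route as the paper: the constant function $r(A) := \mathsf{ff}$ witnesses \RefInd[C], and the soundness clause holds vacuously because $\mathsf{ff}=\mathsf{tt}$ is absurd in the metatheory. You also note that this explosion is metatheoretic rather than an object-level \textsc{EFQ} for $C$; that is a useful clarification, though it does not change the argument.
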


\begin{proof}
	Set $r(A) := \mathsf{ff}$ for every $A$. The soundness condition has an impossible antecedent, so it is satisfied without invoking consistency or any fixed-point.
\end{proof}

\subsection{Sharpness of the local boundary}
\label{sec:l001:sharpness}

The four hypotheses of the core obstruction are individually necessary at the level of arbitrary closure predicates. This can be seen inside the same closed formula language, without adding atoms or semantics.

\begin{proposition}[Sharpness]
\label{prop:sharpness}
	For each of \MP{}, \Cons{}, $\NegFP{B}$, and \LocalLEM{B}, there is a closure predicate for which the other three conditions hold while the selected condition fails.
\end{proposition}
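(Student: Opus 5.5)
The plan is to give four explicit closure predicates on $\Formula$, one for each hypothesis. Each predicate is defined by syntactic shape or by an elementary recursion, so that the three retained conditions can be checked by finite case inspection and the dropped one visibly fails. In each case the fixed-point formula $B$ is chosen together with the predicate, and only constructor distinctness and injectivity of $\Imp$ on formula trees are used, as in Lemma~\ref{lem:no-self-neg}. Write $\top \Defeq \Bott\Imp\Bott$.

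\emph{Failing \Cons{}.} Take $C(A)\Defeq\mathsf{True}$ for every $A$, with any $B$. Then \MP{}, $\NegFP{B}$ and \LocalLEM{B} hold trivially, while $C(\Bott)$ holds, so \Cons{} fails.

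\emph{Failing \MP{}.} Take $C(A)\Defeq A\neq\Bott$, with any $B$.
\begin{itemize}
\item \Cons{} holds by definition.
\item Both $B\Imp\Neg B$ and $\Neg B\Imp B$ are implications, hence distinct from $\Bott$, so $\NegFP{B}$ holds.
\item $\Neg B=B\Imp\Bott\neq\Bott$, so the right disjunct of \LocalLEM{B} holds.
\item \MP{} fails at $A:=\top$, $D:=\Bott$: both $C(\top\Imp\Bott)$ and $C(\top)$ hold, but $C(\Bott)$ does not.
\end{itemize}

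\emph{Failing $\NegFP{B}$.} Take the classical Boolean valuation $v:\Formula\to\{0,1\}$ given by $v(\Bott)=0$ and $v(A\Imp D)=\max(1-v(A),v(D))$, and set $C(A)\Defeq v(A)=1$.
\begin{itemize}
\item \MP{} is the usual soundness of detachment: if $v(A\Imp D)=1$ and $v(A)=1$, then $v(D)=1$.
\item \Cons{} holds since $v(\Bott)=0$.
\item Every formula receives a value in $\{0,1\}$, and $v(\Neg A)=1-v(A)$, so \LocalLEM{B} holds for every $B$.
\item $\NegFP{B}$ fails: if $v(B)=1$ then $v(B\Imp\Neg B)=0$, and if $v(B)=0$ then $v(\Neg B\Imp B)=0$.
\end{itemize}

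\emph{Failing \LocalLEM{B}.} This is the case needing care. Take $B:=\Bott$, so $\Neg B=\top$, and let $C$ be the two-element predicate
\begin{equation}
C(A)\;\Defeq\; A=\Bott\Imp\top \;\lor\; A=\top\Imp\Bott.
\end{equation}
\begin{itemize}
\item $\NegFP{\Bott}$ holds by construction.
\item \Cons{} holds, since $\Bott$ is not an implication.
\item \LocalLEM{\Bott} fails: $\Bott$ is not an implication, and $\top$ is distinct from both members because its components are $\Bott,\Bott$.
\item \MP{} holds vacuously. If $C(A\Imp D)$, then by injectivity of $\Imp$ either $A=\Bott$ or $A=\top$. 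Neither satisfies $C$, by the distinctness facts just noted, so the premise $C(A)$ never holds.
\end{itemize}

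The main obstacle is this last verification of \MP{}. It requires decidable equality on formulas and the injectivity and discrimination of constructors, exactly the tools already used for Lemma~\ref{lem:no-self-neg}. It is finite and mechanizable by case analysis in \textsc{Rocq}.
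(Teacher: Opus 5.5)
Your proof is correct, and its shape matches the paper's: one explicit predicate per hypothesis inside the closed fragment, with no atoms or semantics added. Two of your witnesses are exactly the paper's. These are the total predicate for \Cons{} and, at $B:=\Bott$, the predicate accepting exactly $U=\Bott\Imp T$ and $V=T\Imp\Bott$ (where $T=\Bott\Imp\Bott$) for \LocalLEM{B}.

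The other two witnesses differ. For \MP{}, the paper accepts exactly $T,U,V$, while you accept every implication. Both break detachment at the same instance ($C(T\Imp\Bott)$ and $C(T)$ but not $C(\Bott)$), so this is a minor variation, although yours works for every $B$. For \NegFP{B}, the paper accepts exactly $T$, so \MP{} holds only vacuously and \textsc{Excluded Middle} holds at $B=\Bott$ but not globally. You instead use the two-valued truth-table valuation.

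Your valuation is more informative: it is uniform in $B$, validates \MP{} non-vacuously, and satisfies the full schema \LEM{}. It therefore shows that \MP{}, \Cons{}, and even global closure-level \textsc{Excluded Middle} are jointly satisfiable, so the obstruction rests entirely on the fixed-point premise. The paper's choice buys economy instead: at $B:=\Bott$ its three nontrivial witnesses are all finite subsets of $\{T,U,V\}$ with Boolean-decidable membership, and each verification reduces to a finite case inspection.

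Two small remarks. Your vacuity argument for \MP{} needs only injectivity and discrimination of the constructors, not decidable equality. Your witness for \LocalLEM{B} also generalizes to the set $\{B\Imp\Neg B,\;\Neg B\Imp B\}$ for arbitrary $B$, via the size argument of Lemma~\ref{lem:no-self-neg}. That would make all four of your witnesses uniform in $B$.
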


\begin{proof}
	Use $B:=\Bott$ and abbreviate
	\begin{equation}
		T \Defeq \Bott\Imp\Bott,
		\quad U \Defeq \Bott\Imp T,
		\quad V \Defeq T\Imp\Bott.
	\end{equation}
	Thus $T=\Neg B$, while $U=B\Imp\Neg B$ and $V=\Neg B\Imp B$. The four formulas $\Bott,T,U,V$ are syntactically distinct. (i) If consistency is omitted, let $C$ accept every formula. Then \MP{}, $\NegFP{B}$, and \LocalLEM{B} hold, while $C(\Bott)$ makes \Cons{} fail. (ii) If local \textsc{Excluded Middle} is omitted, let $C$ accept exactly $U$ and $V$. This supplies $\NegFP{B}$ and remains consistent. \textsc{Modus Ponens} holds vacuously because the antecedents of the two accepted implications are respectively $\Bott$ and $T$, neither of which is accepted. Hence neither side of \LocalLEM{B} holds. Membership in the finite set $\{U,V\}$ is Boolean-decidable, so this example also shows that \MemDec{} alone is compatible with the other three hypotheses. (iii) If \textsc{Modus Ponens} is omitted, let $C$ accept exactly $T,U,V$. Then \Cons{}, $\NegFP{B}$, and \LocalLEM{B} hold. But $C(V)$ and $C(T)$ hold while $C(\Bott)$ does not, so detachment along $V=T\Imp\Bott$ fails. (iv) If the negation fixed-point is omitted, let $C$ accept exactly $T$. Then \Cons{} and \LocalLEM{B} hold, and \textsc{Modus Ponens} is again vacuous because $T=\Bott\Imp\Bott$ is accepted while its antecedent $\Bott$ is not. Neither $U$ nor $V$ is accepted, so $\NegFP{B}$ fails.
\end{proof}

\section{Mechanization}

\label{sec:mech}

\begin{mechanization}
\label{mech:mech}

The mechanization separates the executable syntactic substrate from the abstract closure obstruction.
\begin{figure}[H]
	\centering
	\vspace{1em}
	\includegraphics[width=3cm]{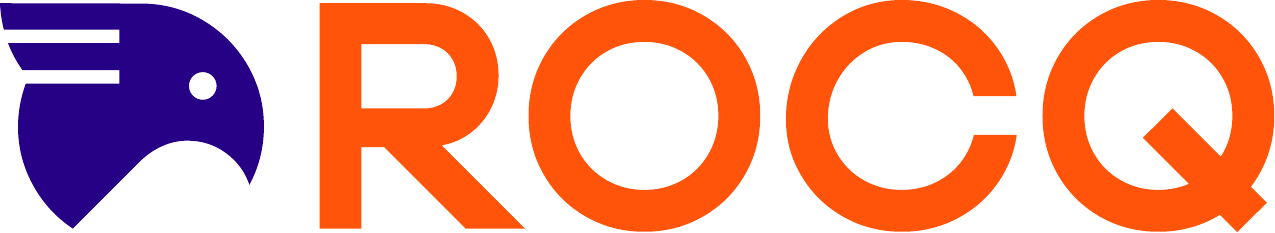}
	\vspace{1em}
	\caption{The \texttt{M001} substrate and the six \texttt{L001} contracts underlying the obstruction are formalized in \textsc{Gallina}, the specification language of the \textsc{Rocq} proof environment.}
	\label{fig:rocq}
\end{figure}
\texttt{M001} provides the closed $\Bott/\Imp$ syntax, the theorem that no formula equals its own negation, a total Boolean proof checker, executable MP and deduction transformations, an executable refutation-to-negation transformation named \texttt{reductio} in the artifact (constructive negation introduction, not a classical principle), syntactic adequacy of checked derivability, context/theory monotonicity, a generic \texttt{SymbolicRegulator} interface, and a pure accepted-output release gate. The theorem \texttt{regulator\_theory\_symbolic\_regulator\_release\_iff} identifies checked derivability with the existence of a finite proof instruction that releases the proposed formula. Its five-contract aggregate \texttt{M001\_CONTRACT} remains \texttt{SYNTAX}, \texttt{ADEQUACY}, \texttt{DEDUCTION}, \texttt{REDUCTIO}, and \texttt{STRUCTURE}, certified by \texttt{m001\_contract\_qed}; release is a derived executable view of Boolean acceptance, not a new logical rule. Its \texttt{AxiomSet} is a caller-supplied Boolean membership function; \texttt{FiniteAxiomSet} is the separate finite-data interface. Extraction remains restricted to the five proof-machine roots in \texttt{M001\_checker.ml/.mli}; neither the regulator packaging nor release is added as an extraction root:
\begin{center}
	\href{https://github.com/Milan-Rosko/Proofcase/tree/main/theories/M001}{\texttt{GitHub $\to$ Milan-Rosko/Proofcase $\to$ M001}}
\end{center}
At the derivability level, \texttt{regulator\_theory\_checked\_derivable\_mp\_lemma} derives Proposition~\ref{prop:m001-checked-mp} from checked proof-script composition. The standard \textsc{K}/\textsc{S} theorems and that composition also yield Proposition~\ref{prop:m001-checked-negfp-collapse}: unlike a bare closure predicate, a checked predicate accepts enough structural implication principles for a negation fixed-point itself to yield $C_{R,\Gamma}(\Bott)$. Corollary~\ref{cor:checked-regulator-obstruction} records this stronger checked boundary.
\texttt{L001} uses only the primitive formula syntax from \texttt{M001}. It defines the closure vocabulary and a goal-frame premise abstractly over $C : \Formula\to\mathsf{Prop}$, then certifies the six contracts listed in Table~\ref{tab:rocq-endpoints}. Its frame contract records self-specialization of the supplied frame, while the derived endpoints \texttt{branchwise\_goal\_relative\_collapse}, \texttt{goal\_relative\_branch\_collapse}, and \texttt{fixedpoint\_gap}, together with the local-detachment bridge, formalize the sharper dependency boundary:
\begin{center}
	\href{https://github.com/Milan-Rosko/Proofcase/tree/main/theories/L001}{\texttt{GitHub $\to$ Milan-Rosko/Proofcase $\to$ L001}}
\end{center}
\begin{table}[H]
\centering
\renewcommand{\arraystretch}{1.4}
\small
\begin{adjustbox}{width=\textwidth}
\begin{tabular}{@{}p{5cm}p{2cm}p{5.2cm}@{}}
	\toprule
	Theorem & Reference & Endpoint \\
	\midrule
	No formula is its own negation & \ref{lem:no-self-neg} & \texttt{formula\_not\_self\_negation} \\
	Local branch collapse & \ref{thm:local-branch-collapse} & \texttt{local\_branch\_collapse} \\
	\textsc{Aporetic Lemma} & \ref{thm:core-diagonal-obstruction} & \texttt{core\_diagonal\_obstruction} \\
	Goal-frame extraction & \ref{thm:eval-bottom-negfixp} & \texttt{eval\_bottom\_negfixp} \\
	Local signed obstruction & \ref{thm:signed-obstruction} & \texttt{local\_signed\_obstruction} \\
	Local membership obstruction & \ref{thm:membership-obstruction} & \texttt{local\_membership\_obstruction} \\
	Refutation-indicator inhabitance & \ref{thm:ref-not-obstructed} & \texttt{closure\_refutation\_inhabited} \\
	\texttt{L001} aggregate contract & \textemdash & \texttt{l001\_contract\_qed} \\
	\bottomrule
\end{tabular}
\end{adjustbox}
\caption{Mapping between the retained contract statements and the reduced \textsc{Rocq} endpoints. The first row belongs to \texttt{M001}; the six closure results and the aggregate belong to \texttt{L001}.}
\label{tab:rocq-endpoints}
\end{table}

The goal-relative, branchwise, local-detachment, and fixed-point-gap results are derived \texttt{L001} theorems rather than additional contract clauses. The paper-level LEM and refutation-completeness corollaries compose these certified results. The stronger checked collapse additionally uses the \texttt{M001} \textsc{K}/\textsc{S} basis and derivability bridge. None enlarges the six-contract \texttt{L001} boundary. Proposition~\ref{prop:sharpness} is likewise a paper-level proof rather than an \texttt{L001} contract. The \texttt{M001} and \texttt{L001} aggregate theorems are \texttt{m001\_contract\_qed} and \texttt{l001\_contract\_qed}. The artifact build is configured to generate assumption reports for the aggregates, and the source contains neither user axioms nor admitted proofs. \texttt{L001} performs no \textsc{OCaml} extraction because its obstruction contracts live in \texttt{Prop}. Its relevant artifact is therefore the assumption report, while \texttt{M001} separately supplies the executable checker artifact.
\end{mechanization}

\section{Discussion}
\label{sec:discussion}

\subsection{Comparisons and outlook}
\label{sec:discussion:comparisons}

Substructural treatments sharpen the comparison. Roberts' refinement of \textsc{Lawvere's Fixed-Point Theorem} \citep{roberts2021} shows that diagonal arguments can survive substantial weakening of surrounding structure. Here the branchwise collapse names only three local detachment instances; global modus-ponens closure is a convenient sufficient condition for them. The proof therefore remains entirely at the closure-detachment level.

Universal-schema presentations of \textsc{Diagonalization}, such as \citet{yanofsky2003}, derive fixed-points from suitable coding and representability conditions. The present theorem begins with the corresponding closure-equivalence point supplied:

\begin{equation}
	B \simeq_C \Neg B.
\end{equation}

The goal frame expresses this premise by self-specialization. Deriving such a frame from a concrete coding apparatus would require additional adequacy or representability assumptions, of the kinds used by the \textsc{Second Incompleteness Theorem} \citep{godel31} and \textsc{Reflection Principles} \citep{feferman62}.

A recursion-theoretic reading would additionally require an effective coding and computation model; no such model is assumed here.

\subsection{Summary and Interpretation}

Indeed, a consistent, modus-ponens-closed primitive regulator admitting $B\simeq_C\Neg B$ cannot validate closure-level \textsc{Excluded Middle}: either verdict at $B$ yields $C(\Bott)$ through the matching fixed-point direction and two detachments. Hence signed classification and refutation completeness are impossible, while ordinary membership decision alone remains compatible. Refutation-sound Boolean indicators are inhabited by the always-false function.

This clarifies the intuitionistic restraint. The constructive refusal is not a competing answer to \LEM{}; it is the refusal to assert the schema uniformly. We may conclude that a consistent $C$ must withhold disjuncts:

\begin{quotation}
	A regulator must not accept either branch at a formula if it is to remain consistent.
\end{quotation}

That diagnosis is not a $C$-fact unless it is imported into $C$. The obstruction begins precisely when a branch verdict is required internally and uniformly. The collapse is therefore not a consequence of “too much” semantics, but of insufficient discrimination: the regulator treats a certain formula as imposing a merely formal closure condition rather than as a \emph{philosphy}.

\section*{References and Notes}

	{\scriptsize
		\bibliographystyle{plainnat}
		\setlength{\bibsep}{0.1em}
		\bibliography{refs}}

\clearpage

\vspace*{\fill}

\subsection*{Note on the Third Version}

This third version brings the exposition into line with the streamlined \texttt{M001} and \texttt{L001} developments. The collapse now works for any target formula, each case isolates the implication and detachment steps it uses, and consistency rules out both sides at the fixed-point. The goal-frame and refutation-sound endpoints now match their streamlined contracts. The exposition also separates the abstract closure boundary from the stronger checked \textsc{K}/\textsc{S} consequence. Derived contract theorems support these paper-level consequences, with the five \texttt{M001} contracts and six \texttt{L001} contracts unchanged.

\subsection*{Final Remarks}

The author welcomes criticism, proposed extensions, scholarly correspondence, and constructive dialogue. No conflicts of interest are declared. This research received no funding.

\begin{center}
	\scriptsize{
	\vspace{1em}
	Milan Rosko\\
	\vspace{1em}
	ORCID: \href{https://orcid.org/0009-0003-1363-7158}{\textsf{0009-0003-1363-7158}}\\[1ex]
	Email: \href{mailto:hi*atsymbol*milanrosko.com}{\textsf{hi*atsymbol*milanrosko.com}}\\[1ex]

	\vspace{1em}
	Licensed under \ccby \\ \href{http://creativecommons.org/licenses/by/4.0/}{\scriptsize\textsf{creativecommons.org/licenses/by/4.0}}
	}
\end{center}
\vspace*{\fill}

\end{document}